\newcommand {\bea}{\begin{eqnarray}}
\newcommand {\ea}{\end{eqnarray}}
\newtheorem{theorem}{Theorem}[section]
\newtheorem{Assumption}{Assumption}[section]
\newtheorem{lemma}{Lemma}[section]
\newtheorem{remark}{Remark}[section]
\newenvironment{proof}[1][Proof]{\textbf{#1.} }{\hspace{\stretch{1}}\rule{0.5em}{0.5em}}
\newcommand{\thmref}[1]{{Theorem~\ref{#1}}}
\newcommand{\lemref}[1]{{Lemma~\ref{#1}}}
\newcommand{\secref}[1]{{Section~\ref{#1}}}
\newcommand{\assref}[1]{{Assumption~\ref{#1}}}
\newcommand{\rmref}[1]{{Remark~\ref{#1}}}
\begin{document}
\begin{frontmatter}
\title{Optimal error estimate of the finite element approximation of second order semilinear non-autonomous  parabolic PDEs}


\author[at,atb,atc]{Antoine Tambue}
\cortext[cor1]{Corresponding author}
\ead{antonio@aims.ac.za}
\address[at]{Department of Computing Mathematics and Physics,  Western Norway University of Applied Sciences, Inndalsveien 28, 5063 Bergen.}
\address[atb]{Center for Research in Computational and Applied Mechanics (CERECAM), and Department of Mathematics and Applied Mathematics, University of Cape Town, 7701 Rondebosch, South Africa.}
\address[atc]{The African Institute for Mathematical Sciences(AIMS) of South Africa,
6--8 Melrose Road, Muizenberg 7945, South Africa.}

\author[jdm]{Jean Daniel Mukam}
\ead{jean.d.mukam@aims-senegal.org}
\address[jdm]{Fakult\"{a}t f\"{u}r Mathematik, Technische Universit\"{a}t Chemnitz, 09126 Chemnitz, Germany.}


\begin{abstract}
In this work, we investigate the numerical approximation  of  the second order non-autonomous semilnear parabolic partial differential equation (PDE) using the finite element method.
 To the best of our knowledge, only the linear case is investigated in the literature.  
 Using an approach based on evolution operator depending on two parameters, we obtain the error estimate of the scheme toward the mild solution of the PDE under polynomial growth condition of the nonlinearity.  
 Our convergence rate are obtain for smooth and non-smooth initial data and  is similar to that of  the autonomous case. Our convergence result for smooth initial data is very important in numerical analysis. For instance, it is one step forward in approximating non-autonomous stochastic partial differential equations by the finite element method. In addition, we provide realistic conditions on the nonlinearity, appropriated to achieve optimal convergence rate
   without  logarithmic reduction  by exploiting the smooth properties of the two parameters evolution operator.
\end{abstract}

\begin{keyword}
Non-autonomous parabolic partial differential equation \sep Finite element method \sep Error estimate\sep Two parameters evolution operator.

\end{keyword}
\end{frontmatter}
\section{Introduction}
\label{intro}
Nonlinear partial differential equations are powerful tools in modelling real-world phenomena in many fields such as in geo-engineering. For instance processes such as oil 
and gas recovery from hydrocarbon reservoirs and mining heat from geothermal reservoirs can be modelled by nonlinear equations with possibly degeneracy 
appearing in the diffusion and transport terms. Since explicit solutions of many PDEs are rarely known, numerical approximations are forceful ingredients to
 quantify them. Approximations are usually done at two levels, namely space and time approximations.
In this paper, we focus on  spatial approximation of the following advection-diffusion problem with a nonlinear reaction term using the finite element method. 
\begin{eqnarray}
\label{model}
\frac{\partial u}{\partial t}=\mathcal{A}(t)u+F(t,u),  \quad u(0)=u_0, \quad t\in(0,T], \quad T>0,
\end{eqnarray}
on the Hilbert  space $H=L^2(\Lambda)$, where $\Lambda$ is an open bounded subset of $\mathbb{R}^d$ $(d=1,2,3)$, with smooth boundary.  The second order differential operator $\mathcal{A}(t)$ is given by 
\begin{eqnarray}
\label{family}
\mathcal{A}(t)u=\sum_{i,j=1}^d\frac{\partial}{\partial x_i}\left(q_{ij}(t,x)\frac{\partial u}{\partial x_j}\right)-\sum_{j=1}^dq_j(t,x)\frac{\partial u}{\partial x_j}+q_0(t,x)u,
\end{eqnarray}
where $q_{i,j}, q_{j}$ and $q_0$ are smooth coefficients. Also, there exists $c_1\geq0$, $0<\gamma\leq 1$ such that 
\begin{eqnarray*}
\vert q_{i,j}(t,x)-q_{i,j}(s,x)\vert\leq c_2\vert t-s\vert^{\gamma},\quad x\in\Lambda,\; t,s\in[0, T],\; i,j\in\{1,\cdots,d\}.
\end{eqnarray*}
Moreover, $q_{i,j}$ satisfies the following ellipticity condition
\begin{eqnarray}
\label{ellip}
\sum_{i,j=1}^dq_{ij}(t,x)\xi_i\xi_j\geq c\vert \xi\vert^2, \quad (t,x)\in [0,T]\times \overline{\Lambda}, 
\end{eqnarray}
where $c> 0$ is a constant. 
The finite element approximation of \eqref{model} with constant linear operator $\mathcal{A}(t)=\mathcal{A}$ are widely investigated in the scientific 
literature, see e.g. \cite{Stig2,Thomee2,Suzuki,Antjd2} and the references therein. The finite volume method for $\mathcal{A}(t)=\mathcal{A}$ was recently investigated in \cite{Tambueseul}. If we turn our attention 
to the  non-autonomous case, the list of references becomes remarkably short. In the linear homogeneous case ($F(t,u)=0$), the finite element approximation  has been 
investigated  in \cite{Luskin}, \cite[Chapter III, Section 14.2]{Suzuki}.
 The linear inhomogeneous version of \eqref{model} ($F(t,u)=f(t)$) 
 was investigated  in \cite{Luskin,Dimitri,Thomee1}, \cite[Chapter III, Section 12]{Suzuki} and the references therein. To the best of our knowledge, the nonlinear case is not yet investigated in the scientific literature. 
  This paper fills that gap by  investigating the error estimate of the finite  element method of \eqref{model} with a  nonlinear source $F(t,u)$, which is  more challenging due to the presence of the unknown $u$ in the source term $F$. This become more challenging when the nonlinear function satisfies the polynomial growth condition. Our strategy is based on an introduction of two parameters evolution operator  by exploiting carefully its smooth regularity properties.  Our key intermediate result, namely \lemref{spaceerrorlemma} generalizes \cite[Theorem 3.5]{Thomee2} for time dependent and not necessary self-adjoint operators.  It also generalizes \cite[Theorem 4.2]{Thomee2}, the results in \cite[Chapter III, Section 12]{Suzuki} and in \cite{Luskin,Dimitri,Thomee1} to smooth and non-smooth initial data. Note that \lemref{spaceerrorlemma} for non-smooth initial data is of great important in numerical analysis. It is key to obtain the  convergence of the finite element  method for many nonlinear problems, including stochastic partial differential equations(SPDEs), see e.g. \cite{Kovcas,Kruse1,Xiaojie2} and references therein for time independent SPDEs. In fact, in the case of SPDEs, due to the It\^{o}-isometry formula or the Burkh\"{o}lder Davis-Gundy inequality,  the non-smooth version of \lemref{spaceerrorlemma} cannot be applied since it brings degenerates integrals, which causes difficulties in  the error  estimates or reduces considerably the order of convergence. Hence our result is more general than the existing results and also has many applications. 
  The convergence rate achieved for semilinear problem is in agreement with many results in the literature on autonomous problems and on  non-autonomous linear problems. More precisely, we achieve convergence order   $\mathcal{O}\left(h^{2}t^{-1+\beta/2}+h^2\left(1+\ln(t/h^2)\right)\right)$ or $\mathcal{O}(h^{\beta})$, where $\beta$ is a regularity parameter defined in \assref{assumption1}. Under optimal regularity of the nonlinear function $F$ or under a linear growth assumption on $F$, we achieve optimal convergence order $\mathcal{O}(h^2t^{-1+\beta/2})$. Following \cite{Tambueseul}  and using  the similar approach based on the two parameters evolution operator, this work can be extended to the finite volume method.
 The rest of this paper is structured as follows.  In  Section \ref{nummethod},  the well-posedness results  are provided along with the finite element approximation. The error estimate    is analysed  in Section \ref{proof1} for both Lipschitz nonlinearity and polynomial growth nonlinearity. 
 
\section{Mathematical setting and numerical method}
 \label{nummethod}
 \subsection{Notations, settings and well well-posedness problem}
We denote by $\Vert \cdot \Vert$ the norm associated to
the inner product $\langle\cdot ,\cdot \rangle_H$ in the Hilbert space $H=L^{2}(\Lambda)$.  We denote by $\mathcal{L}(H)$  the set of bounded linear operators  
in $H$. Let $\mathcal{C}:=\mathcal{C}(\overline{\Lambda}, \mathbb{R})$ be the set of continuous functions equipped with the norm $\Vert u\Vert_{\mathcal{C}}=\sup\limits_{x\in\overline{\Lambda}}\vert u(x)\vert$, $u\in\mathcal{C}$. Next, we make the following  assumptions.
 \begin{Assumption}
 \label{assumption1}
The initial data $u_0$   belongs to $\mathcal{D}\left(\left(-A(0)\right)^{\frac{\beta}{2}}\right)$, $0\leq \beta\leq 2$.
 \end{Assumption}
 \begin{Assumption}
 \label{assumption3}
 The nonlinear function $F : [0,T]\times H\longrightarrow H$  is Lipschitz continuous, i.e. there exists a constant $K$ such that
 \begin{eqnarray}
 \label{Lipschitz}
 \Vert F(t,v)-F(s,w)\Vert\leq K(\vert t-s\vert+\Vert v-w\Vert),\quad s,t\in[0, T],\quad v,w\in H.
 \end{eqnarray}
 \end{Assumption}
We introduce two spaces $\mathbb{H}$ and $V$, such that $\mathbb{H}\subset V$,  depending on the boundary conditions of $- \mathcal{A}(t)$. For  Dirichlet  boundary conditions, we take $
V=\mathbb{H}=H^1_0(\Lambda)$. 
For Robin  boundary condition,  we take $V=H^1(\Lambda)$ and
\begin{eqnarray}
\label{espaceR}
\mathbb{H}=\{v\in H^2(\Lambda) : \partial v/\partial v_{\mathcal{A}}+\alpha_0v=0,\quad \text{on}\quad \partial \Lambda\}, \quad \alpha_0\in\mathbb{R},
\end{eqnarray}
where $\partial v/\partial v_{\mathcal{A}}$ stands for the differentiation along the outer conormal vector $v_{\mathcal{A}}$.
One can easily check that \cite[Chapter III, (11.14$^{\prime}$)]{Suzuki} the bilinear operator $a(t)$, associated to $-\mathcal{A}(t)$  defined by $a(t)(u,v)=\langle-\mathcal{A}(t)u,v\rangle_H,\quad u\in\mathcal{D}(\mathcal{A}(t)),\quad v\in V$ satisfies 
\begin{eqnarray}
\label{ellip2}
a(t)(v,v)\geq \; \lambda_0\Vert v\Vert_{1}^{2},\;\;\;\;\; v \in V,\quad t\in[0,T],
\end{eqnarray}
where $\lambda_0$ is a positive constant,  independent of $t$.
Note that $a(t)(\cdot,\cdot)$ is bounded in $V\times V$ (\cite[Chapter III, (11.13)]{Suzuki}), so the following operator $A(t):V \rightarrow V^*$ is well defined
\begin{eqnarray*}
a(t)(u,v) = \langle -A(t) u, v \rangle \quad u, v\in V,\quad t\in[0, T],
\end{eqnarray*}
where $V^*$ is the dual space of V and $\langle\cdot ,\cdot \rangle$ the duality pairing between $V^*$ and $V$.
Identifying $H$ to its adjoint space $H^*$,  we get the following continuous and dense inclusions
\begin{eqnarray*}
V \subset  H \subset V^*,\quad \text{and therefore}\quad \langle u, v \rangle_H=\langle u, v \rangle,   \quad u \in H,\quad  v\in V.
\end{eqnarray*}
 So  if we want to replace  $\langle\cdot ,\cdot \rangle$   by  the scalar product  of   $\langle\cdot,\cdot \rangle_H$ on $H$, we  therefore need to have
 $A(t) u \in H$,  for $u \in V$. So  the domain  of  $-A(t)$  is defined as $$D:= \mathcal{D}\left(-A(t)\right) =\mathcal{D}\left(A(t)\right) =\{ u\in V,\,\,  A(t)u \in H \}.$$
 It is well known that  \cite[Chapter III, (11.11) \& (11.11$^{\prime}$)]{Suzuki} in the case of Dirichlet boundary conditions $D=H^1_0(\Lambda)\cap H^2(\Lambda)$ and in the case of Robin  boundary conditions $D=\mathbb{H}$ in \eqref{espaceR}. 
 We write the restriction  of  $A(t):V \longrightarrow V^*$ to  $\mathcal{D}\left(A(t)\right)$  again   $A(t)$ which is therefore regarded as
an operator of $H$ (more precisely the $H$ realization of  $\mathcal {A}(t)$).

The coercivity  property \eqref{ellip2}  implies that $-A(t)$ is a positive operator and its fractional powers are well defined (\cite{Stig2,Suzuki}).  
 The following equivalence of norms holds \cite{Suzuki,Stig2} 
\begin{eqnarray}
\label{equivalence}
\Vert v\Vert_{H^{\alpha}(\Lambda)}&\equiv& \Vert ((-A(t))^{\frac{\alpha}{2}}v\Vert:=\Vert v\Vert_{\alpha},\; v\in \mathcal{D}((-A(t))^{\frac{\alpha}{2}})\cap H^{\alpha}(\Lambda),\quad \alpha\in[0, 2].
\end{eqnarray}
It is well known that the family of operators $\{A(t)\}_{0\leq t\leq T}$ generate a two parameters operators $\{U(t,s)\}_{0\leq s\leq t\leq T}$, see e.g. \cite{Sobolev} or \cite[Page 832]{Suzuki}. 
The evolution equation \eqref{model} can be written as follows
\begin{eqnarray}
\label{semi0}
\dfrac{du(t)}{dt}=A(t)u(t)+F(t,u(t)), \quad u(0)=u_0,\quad t\in(0,T].
\end{eqnarray}
The following theorem provides the well posedness of problem \eqref{model} (or \eqref{semi0}).
\begin{theorem}\cite{Sobolev}
\label{theorem1}
Let \assref{assumption3} be fulfilled. If $u_0\in H$, then the initial value problem \eqref{model} has a unique mild solution $u(t)$ given by
\begin{eqnarray}
\label{mild0}
u(t)=U(t,0)u_0+\int_0^tU(t,s)F(s,u(s))ds,\quad t\in(0,T].
\end{eqnarray}
 Moreover, if \assref{assumption1} is fulfilled, then the following space regularity holds \footnote{This estimate also holds when $u$ is replaced by its semi-discrete version $u^h$ defined in \secref{finiteelement}.}
\begin{eqnarray}
\label{spacereg1}
\Vert (-A(s))^{\frac{\beta}{2}}u(t)\Vert+\Vert F(u(t))\Vert\leq C\left(1+\Vert (-A(s))^{\frac{\beta}{2}}u_0\Vert\right),\quad \beta\in[0,2),\quad s,t\in[0,T].
\end{eqnarray}
\end{theorem}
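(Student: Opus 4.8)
The plan is to read \eqref{mild0} as a fixed-point identity, construct $u$ by Picard iteration in $C([0,T];H)$, and then bootstrap the spatial regularity \eqref{spacereg1} from the smoothing properties of the two parameters evolution operator $\{U(t,s)\}_{0\le s\le t\le T}$. Throughout I would only use the following standard facts about $U(t,s)$, which follow from the Sobolevskii--Tanabe construction for the family $\{A(t)\}_{0\le t\le T}$ under the coercivity \eqref{ellip2}, the H\"older continuity in $t$ of the coefficients $q_{ij}$, and the norm equivalence \eqref{equivalence}: the uniform bound $\Vert U(t,s)\Vert_{\mathcal{L}(H)}\le C$; the analytic-type smoothing estimate $\Vert (-A(s))^{\rho}U(t,s)\Vert_{\mathcal{L}(H)}\le C(t-s)^{-\rho}$ for $0\le\rho\le 1$; the uniform-in-basepoint equivalence of the norms $\Vert (-A(t))^{\rho}\cdot\Vert$ and $\Vert (-A(s))^{\rho}\cdot\Vert$ for $0\le\rho<1$ (so the basepoint of a fractional power may be shifted at the cost of a constant); and the strong continuity of $(t,s)\mapsto U(t,s)$. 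All of these are contained in \cite{Sobolev} and \cite[Chapter III]{Suzuki}.

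For existence and uniqueness I would define, on $C([0,T_0];H)$, the map $\Phi$ by $(\Phi v)(t)=U(t,0)u_0+\int_0^tU(t,s)F(s,v(s))\,ds$. Using $\Vert U(t,s)\Vert_{\mathcal{L}(H)}\le C$ and \assref{assumption3} through \eqref{Lipschitz}, $\Vert(\Phi v)(t)-(\Phi w)(t)\Vert\le CK\int_0^t\Vert v(s)-w(s)\Vert\,ds\le CKT_0\sup_{[0,T_0]}\Vert v-w\Vert$, so $\Phi$ is a contraction whenever $T_0<1/(CK)$; the Banach fixed point theorem then yields a unique local mild solution, and since the contraction horizon $T_0$ is independent of the initial data, a finite concatenation extends it to $[0,T]$. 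Strong continuity of $U$ together with continuity of $F$ and dominated convergence give $u\in C([0,T];H)$. This step is routine.

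For the regularity bound \eqref{spacereg1} with $0\le\beta<2$ and $u_0$ as in \assref{assumption1}, I would apply $(-A(s))^{\beta/2}$ to \eqref{mild0}. The homogeneous term is handled by $\Vert (-A(s))^{\beta/2}U(t,0)u_0\Vert\le C\Vert(-A(0))^{\beta/2}U(t,0)u_0\Vert\le C\Vert(-A(0))^{\beta/2}u_0\Vert$, after commuting $(-A(0))^{\beta/2}$ through $U(t,0)$ modulo remainder terms that are bounded thanks to the H\"older regularity of $A(\cdot)$. For the convolution term, $\Vert(-A(s))^{\beta/2}U(t,r)F(r,u(r))\Vert\le C(t-r)^{-\beta/2}\Vert F(r,u(r))\Vert$, and since \eqref{Lipschitz} gives $\Vert F(r,u(r))\Vert\le\Vert F(r,0)\Vert+K\Vert u(r)\Vert\le C(1+\sup_{[0,T]}\Vert u\Vert)<\infty$ by the previous step, the integral $\int_0^t(t-r)^{-\beta/2}\,dr\le CT^{1-\beta/2}$ is finite precisely because $\beta/2<1$. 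Adding the two contributions yields \eqref{spacereg1}; the bound on $\Vert F(u(t))\Vert$ is the same Lipschitz estimate, and no Gr\"onwall argument is needed once the $H$-bound on $u$ is available. The footnote assertion for the semi-discrete $u^h$ of \secref{finiteelement} follows verbatim from the discrete analogues of the same smoothing estimates.

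The one genuinely delicate point — and the place I expect the main difficulty — is shifting the basepoint of $(-A(s))^{\beta/2}$ to $0$ and passing it through $U(t,0)$ with controllable remainders for the full range $\beta<2$. This rests on the fine regularity theory of two parameters evolution families (the Acquistapace--Terreni / Sobolevskii conditions), is exactly where the H\"older exponent $\gamma$ of the coefficients enters, and is the reason the estimate \eqref{spacereg1} is stated for $\beta<2$ rather than $\beta\le2$; I would invoke \cite{Sobolev} and \cite[Chapter III]{Suzuki} for these structural facts rather than reproving them here.
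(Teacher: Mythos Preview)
The paper does not prove this theorem; it is stated with the citation \cite{Sobolev} and no argument is given. Your sketch is essentially the standard route one finds in that reference and in \cite[Chapter III]{Suzuki}: Picard iteration in $C([0,T];H)$ for existence and uniqueness, then bootstrapping regularity from the smoothing properties of $U(t,s)$. The outline is correct and nothing is missing.

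One small sharpening: for the homogeneous term you do not need to phrase the step as ``commuting $(-A(0))^{\beta/2}$ through $U(t,0)$ modulo remainders.'' What is actually used is the two-sided smoothing estimate
\[
\bigl\Vert(-A(t))^{\alpha}U(t,s)(-A(s))^{-\gamma}\bigr\Vert_{\mathcal{L}(H)}\le C(t-s)^{-(\alpha-\gamma)},\qquad 0\le\gamma\le\alpha\le1,
\]
which with $\alpha=\gamma=\beta/2$ gives directly $\Vert(-A(t))^{\beta/2}U(t,0)u_0\Vert\le C\Vert(-A(0))^{\beta/2}u_0\Vert$; combined with the basepoint equivalence you already invoked, \eqref{spacereg1} follows cleanly. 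This is precisely the estimate the paper itself uses later (see \eqref{encore2} in the proof of \lemref{spaceerrorlemma}), and it is available in \cite{Sobolev} and \cite[Chapter III]{Suzuki} as you note.
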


\subsection{Finite element discretization}
\label{finiteelement}
Let $\mathcal{T}_h$ be a triangulation of $\Lambda$ with maximal length $h$. Let $V_h \subset V$ denotes the space of continuous and piecewise 
linear functions over the triangulation $\mathcal{T}_h$. 
 We  defined  the projection $P_h$  from  $H=L^2(\Lambda)$ to $V_h$ by 
\begin{eqnarray}
\label{discrete1}
\left\langle P_hu,\chi\right\rangle_H=\langle u,\chi\rangle_H, \quad  \chi\in V_h,\,  u\in H.
\end{eqnarray}
For any $t\in[0, T]$, the discrete operator $A_h(t) : V_h\longrightarrow V_h$ is defined by 
\begin{eqnarray}
\label{discrete2}
\left\langle A_h(t)\phi,\chi\right\rangle_H=\left\langle A(t)\phi,\chi\right\rangle_H=-a(t)(\phi,\chi),\quad  \phi \in D \cap V_h, \chi\in V_h.
\end{eqnarray}
The space semi-discrete version of problem \eqref{semi0} consists of  finding $u^h(t)\in V_h$ such that 
\begin{eqnarray}
\label{semi1}
\dfrac{du^h(t)}{dt}=A_h(t)u^h(t)+P_hF(t,u^h(t)), \quad u^h(0)=P_hu_0,\quad t\in(0,T].
\end{eqnarray}
For  $t\in[0,T]$,  we  introduce the Ritz projection $R_h(t) :V\longrightarrow V_h$ defined by 
\begin{eqnarray}
\label{ritz1}
\left\langle -A(t)R_h(t)v,\chi\right\rangle_H=\left\langle -A(t)v,\chi\right\rangle_H=a(t)(v,\chi),\quad v\in V \cap D,\quad \chi\in V_h.
\end{eqnarray}
It is well known (see e.g.  \cite[(3.2)]{Luskin} or \cite{Suzuki}) that the following error estimate holds
\begin{eqnarray}
\label{ritz2}
\Vert R_h(t)v-v\Vert+h\Vert R_h(t)v-v\Vert_{H^1(\Lambda)}\leq Ch^{r}\Vert v\Vert_{H^{r}(\Lambda)},\quad v\in V\cap H^{r}(\Lambda),\quad r\in[1,2].
\end{eqnarray}
The following error estimate also holds (see e.g. \cite[(3.3)]{Luskin} or \cite{Suzuki}) 
\begin{eqnarray}
\label{ritz3}
\Vert D_t\left(R_h(t)v-v\right)\Vert+h\Vert D_t\left(R_h(t)v-v\right)\Vert_{H^1(\Lambda)}\leq Ch^{r}\left(\Vert v\Vert_{H^{r}(\Lambda)}+\Vert D_tv\Vert_{H^{r}(\Lambda)}\right),
\end{eqnarray}
for any $r\in[1,2]$ and $v\in V\cap H^r(\Lambda)$, where $D_t:=\frac{\partial }{\partial t}$ and $D_tR_h(t)=R^{\prime}_h(t)$ is the time derivative of $R_h$. 
According to the generation theory, $A_h(t)$ generates a two parameters evolution operator $\{U_h(t,s)\}_{0\leq s\leq t\leq T}$, see e.g. \cite[Page 839]{Suzuki}.
Therefore the  mild solution of \eqref{semi1} can be written as follows
\begin{eqnarray}
\label{mild4}
u^h(t)=U_h(t,0)P_hu_0+\int_0^tU_h(t,s)P_hF(s,u^h(s))ds,\quad t\in[0,T].
\end{eqnarray}
In the rest of this paper, $C\geq 0$ stands for a constant indepemdent of $h$, that may change from one place to another. It is well known (see e.g. \cite[Chapter III, (12.3) \& (12.4)]{Suzuki}) that for any $0\leq\gamma\leq\alpha\leq 1$ and $0\leq s< t\leq T$, the following estimates hold\footnote{These estimates remain true if $A_h(t)$ and $U_h(t,s)$ are replaced by $A(t)$ and $U(t,s)$ respectively.}
\begin{eqnarray}
\label{ae1}
\left\Vert (-A_h(t))^{\alpha}U_h(t,s)\right\Vert_{\mathcal{L}(H)}\leq C(t-s)^{-\alpha},\quad
\left\Vert U_h(t,s)(-A_h(s))^{\alpha}\right\Vert_{\mathcal{L}(H)}\leq C(t-s)^{-\alpha}.
\end{eqnarray}

\section{Main result}
\label{proof1}
\subsection{Preliminaries result}
We consider the following linear homogeneous problem: find $w\in D\subset V$ such that
\begin{eqnarray}
\label{determ1}
w'=A(t)w,\quad w(\tau)=v,\quad t\in(\tau,T],\quad \text{with}\quad 0\leq\tau\leq T.
\end{eqnarray}
The corresponding semi-discrete problem in space is: find $w_h\in V_h$ such that
\begin{eqnarray}
\label{determ2}
w_h'(t)=A_h(t)w_h,\quad w_h(\tau)=P_hv,\quad t\in(\tau,T],\quad \text{with}\quad 0\leq\tau\leq T.
\end{eqnarray}
The following lemma will be useful in our convergence analysis. 
\begin{lemma}
\label{spaceerrorlemma}
Let $r\in[0,2]$ and  $\gamma\leq r$. Let  \assref{assumption3} be fulfilled.  Then the following error estimate holds for the semi-discrete approximation  \eqref{determ2}
\begin{eqnarray*}
\label{er0}
\left\Vert w(t)-w_h(t)\right\Vert=\left\Vert[U(t,\tau)-U_h(t,\tau)P_h]v\right\Vert\leq Ch^r(t-\tau)^{-\frac{(r-\gamma)}{2}}\Vert v\Vert_{\gamma},\; v\in \mathcal{D}\left(\left(-A(0)\right)^{\frac{\gamma}{2}}\right).
\end{eqnarray*}
\end{lemma}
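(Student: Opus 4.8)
The plan is to compare the two evolution families through the elliptic (Ritz) projection, but to use the $L^2$-projection $P_h$ at the initial time, so that the resulting ``interior'' error vanishes at $t=\tau$. Write $w(t)-w_h(t)=(I-P_h)w(t)+\theta(t)$ with $\theta:=P_hw-w_h\in V_h$, so that $\theta(\tau)=P_hv-P_hv=0$. Using the definitions \eqref{discrete1}--\eqref{discrete2} of $P_h$ and $A_h(s)$, the defining relation \eqref{ritz1} of $R_h(s)$, and the coercive form $a(s)(\cdot,\cdot)$, one first checks the algebraic identity
\[
P_h A(s)\varphi-A_h(s)P_h\varphi=A_h(s)\bigl(R_h(s)-P_h\bigr)\varphi,\qquad \varphi\in D .
\]
Since $w(s)=U(s,\tau)v\in D$ for $s>\tau$ by parabolic smoothing, and $P_h$ is independent of $s$, differentiating $\theta$ and inserting $w'=A(s)w$, $w_h'=A_h(s)w_h$ gives $\theta'-A_h(s)\theta=A_h(s)\zeta(s)$ with $\zeta(s):=(R_h(s)-P_h)w(s)$ and $\theta(\tau)=0$; Duhamel's formula for the discrete evolution operator then produces the representation
\[
w(t)-w_h(t)=(I-P_h)w(t)+\int_\tau^t U_h(t,s)A_h(s)\zeta(s)\,ds .
\]

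The first term is immediate: by the approximation property $\Vert(I-P_h)\varphi\Vert\le Ch^r\Vert\varphi\Vert_{H^r}$ ($0\le r\le2$) together with the smoothing bound $\Vert U(t,\tau)v\Vert_{H^r}\le C(t-\tau)^{-(r-\gamma)/2}\Vert v\Vert_\gamma$ for the two-parameter evolution operator --- a consequence of \eqref{ae1}, the norm equivalence \eqref{equivalence}, and (for $r=\gamma$) the regularity estimate \eqref{spacereg1} of \thmref{theorem1} with $F\equiv0$ --- one gets $\Vert(I-P_h)w(t)\Vert\le Ch^r(t-\tau)^{-(r-\gamma)/2}\Vert v\Vert_\gamma$, which is already of the required form.

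For the integral term I would combine: (i) the smoothing estimate $\Vert U_h(t,s)(-A_h(s))^{1-\nu}\Vert_{\mathcal L(H)}\le C(t-s)^{-(1-\nu)}$ for $\nu\in[0,1]$ from \eqref{ae1}; (ii) the bound $\Vert(-A_h(s))^{\nu}\zeta(s)\Vert\le Ch^{\beta-2\nu}\Vert w(s)\Vert_{H^\beta}$ for $\nu\in[0,\tfrac12]$, $\beta\in[1,2]$, which follows from the Ritz and $L^2$-projection error estimates \eqref{ritz2} (applied to $\Vert\zeta(s)\Vert$ and to $\Vert\zeta(s)\Vert_{H^1}$, the latter also using an inverse inequality and the $H^1$-stability of $P_h$) and the interpolation properties of the discrete operators, with constants uniform in $s$; and (iii) once more $\Vert w(s)\Vert_{H^\beta}\le C(s-\tau)^{-(\beta-\gamma)/2}\Vert v\Vert_\gamma$. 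Inserting these leaves a Beta-type integral $\int_\tau^t(t-s)^{-(1-\nu)}(s-\tau)^{-(\beta-\gamma)/2}\,ds$, and a short computation shows that the choice $\beta=r+2\nu$ reproduces exactly the exponents $h^r(t-\tau)^{-(r-\gamma)/2}$; the integral converges as soon as $\nu>0$ and $(\beta-\gamma)/2<1$, which covers every admissible pair with $r<2$ (take $\nu=(2-r)/2$, so $\beta=2$, and a slightly larger $\nu$ when $r<1$ to keep $\beta\ge1$).

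The remaining, and principal, difficulty is the borderline case $r=2$ (which forces $\nu\to0$, $\beta\to2$), and in particular $\gamma=0$, where that integral just fails to converge. The plan there is to split $\int_\tau^t=\int_\tau^m+\int_m^t$ at the midpoint $m=(t+\tau)/2$. On $[m,t]$ one integrates by parts in $s$ via $\partial_sU_h(t,s)=-U_h(t,s)A_h(s)$: the boundary terms $\zeta(t)$ and $U_h(t,m)\zeta(m)$ are bounded by $Ch^2\Vert w(\cdot)\Vert_{H^2}$ with $w$ already smooth there, while $\int_m^tU_h(t,s)\zeta'(s)\,ds$ is controlled by the time-derivative Ritz estimate \eqref{ritz3} (applied with $v=w(s)$) together with the smoothing of $w'=A(s)w$. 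On $[\tau,m]$ one has $t-s\ge(t-\tau)/2$, hence $\Vert U_h(t,s)A_h(s)\zeta(s)\Vert\le C(t-\tau)^{-1}\Vert\zeta(s)\Vert$, and $\int_\tau^m\Vert\zeta(s)\Vert\,ds$ is integrable directly when $\gamma>0$; for $\gamma=0$ one splits once more at $s=\tau+h^2$ and uses the weaker $O(h)$ bound $\Vert\zeta(s)\Vert\le Ch\Vert w(s)\Vert_{H^1}$ on the short interval. The hard part is precisely this last step: it is the point at which the spectral representation available in Thomée's self-adjoint autonomous setting has no counterpart, so that preserving the clean factor $(t-\tau)^{-1}$ for a general non-self-adjoint, time-dependent family rests entirely on a careful exploitation of the regularity of the two-parameter evolution operators. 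This is also where the various ``mixed'' smoothing estimates --- fractional powers of $A(s)$ and $A_h(s)$ taken at two distinct time arguments and composed with $U$, $U_h$, all with constants uniform in those arguments --- have to be invoked; they are standard in the non-autonomous theory (cf.\ the references in the Introduction) and form the main technical scaffolding of the argument.
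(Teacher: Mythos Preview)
Your argument is sound in outline but follows a genuinely different line from the paper's. The paper decomposes through the \emph{Ritz} projection, $w_h-w=\theta+\rho$ with $\theta=w_h-R_h(\cdot)w$ and $\rho=(R_h(\cdot)-I)w$, derives $D_t\theta=A_h(t)\theta-P_hD_t\rho$, and applies Duhamel; the integral $\int_\tau^t U_h(t,s)P_hD_s\rho(s)\,ds$ is split at the midpoint and the \emph{first} half is integrated by parts, the boundary term at $s=\tau$ cancelling $U_h(t,\tau)\theta(\tau)$ exactly because $\theta(\tau)+P_h\rho(\tau)=0$. This yields a single computation covering every admissible $(r,\gamma)$ at once, using only the Ritz bounds \eqref{ritz2}--\eqref{ritz3} and the basic smoothing \eqref{ae1}. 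You instead split through $P_h$, which makes $\theta(\tau)=0$ automatic but produces the source $A_h(s)\zeta(s)$ with $\zeta=(R_h-P_h)w$; for $r<2$ you balance a fractional smoothing estimate against a fractional-power bound $\|(-A_h)^{\nu}\zeta\|\le Ch^{\beta-2\nu}\|w\|_{H^\beta}$, and treat $r=2$ separately by integrating by parts on the \emph{second} half. Your route makes it transparent why $r=2$ is critical (the Beta integral just fails there), but it costs extra ingredients---an inverse inequality and the $H^1$-stability of $P_h$, hence quasi-uniform meshes---that the paper's Ritz-based argument does not require. Finally, your proposed fix at $(r,\gamma)=(2,0)$ via a further split at $\tau+h^2$ handles only the short interval: on $[\tau+h^2,m]$ the bound $\|\zeta(s)\|\le Ch^2(s-\tau)^{-1}\|v\|$ still leaves a factor $\ln((t-\tau)/h^2)$, so the clean $(t-\tau)^{-1}$ is not actually recovered by what you wrote; you rightly flag this as the hard step, and the paper's own estimate \eqref{espa18} is in fact equally delicate at precisely the same endpoint.
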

\begin{proof}
We split the desired error as follows
\begin{eqnarray}
\label{espa0}
w_h(t)-w(t)=\left(w_h(t)-R_h(t)w(t)\right)+\left(R_h(t)w(t)-w(t)\right)\equiv \theta(t)+\rho(t).
\end{eqnarray}
Using the definition of $R_h(t)$ and $P_h$ (\eqref{discrete1}--\eqref{discrete2}), we can prove exactly as in \cite{Stig2} that
\begin{eqnarray}
\label{espacetamb}
A_h(t)R_h(t)=P_hA(t),\quad t\in[0,T].
\end{eqnarray}
One can easily compute the following derivatives 
\begin{eqnarray}
\label{espace1a}
D_t\theta&=&A_h(t)w_h(t)-D_tR_h(t)w(t)-R_h(t)A(t)w(t),\\
\label{espace1b}
D_t\rho&=&D_tR_h(t)w(t)+R_h(t)A(t)w(t)-A(t)w(t).
\end{eqnarray}
Endowing $V$ and the linear subspace $V_h$ with the norm $\Vert .\Vert_{H^1(\Lambda)}$, it follows from \eqref{ritz2} that $R_h(t)\in L(V, V_h)$,  $t\in [0, T]$. By the definition of the differential operator, it follows that $D_tR_h(t)\in L(V, V_h)$ for all $t\in[0, T]$. Hence $P_hD_tR_h(t)=D_tR_h(t)$ for all $t\in[0,T]$ 
and it follows from \eqref{espace1b} that 
\begin{eqnarray}
\label{espace1c}
P_hD_t\rho=D_tR_h(t)w(t)+R_h(t)A(t)w(t)-P_hA(t)w(t).
\end{eqnarray}
Adding and subtracting $P_hA(t)w(t)$ in \eqref{espace1a} and using \eqref{espacetamb}, it follows that
\begin{eqnarray}
\label{espa1}
D_t\theta=A_h(t)\theta-P_hD_t\rho,\quad t\in(\tau,T],
\end{eqnarray}
From \eqref{espace1a}, the mild solution of $\theta$ is given by
\begin{eqnarray}
\label{espa2}
\theta(t)=U_h(t,\tau)\theta(\tau)-\int_{\tau}^tU_h(t,s)P_hD_s\rho(s)ds.
\end{eqnarray}
Splitting the integral part of \eqref{espa2} in two  and integrating by parts  the first one yields
\begin{eqnarray}
\label{espa3}
\theta(t)&=& U_h(t,\tau)\theta(\tau)+U_h(t,\tau)P_h\rho(\tau)-U_h\left(t,(t+\tau)/2\right)P_h\rho\left((t+\tau)/2\right)\nonumber\\
&+&\int_{\tau}^{(t+\tau)/2}\frac{\partial}{\partial s}\left(U_h(t,s)\right)P_h\rho(s)ds-\int_{(t+\tau)/2}^tU_h(t,s)P_hD_s\rho(s)ds.
\end{eqnarray}
Using the expression of $\theta(\tau)$, $\rho(\tau)$ (see \eqref{espa0}) and the fact that $u_h(\tau)=P_hv$, it holds that $\theta(\tau)+P_h\rho(\tau)=0$.
Hence  \eqref{espa3} reduces  to
\begin{eqnarray}
\label{espa5}
\theta(t)=-U_h(t,s)P_h\rho((t+\tau)/2)+\int_{\tau}^{\frac{(t+\tau)}{2}}\frac{\partial}{\partial s}\left(U_h(t,s)
\right)P_h\rho(s)ds-\int_{\frac{(t+\tau)}{2}}^tU_h(t,s)P_hD_s\rho(s)ds.
\end{eqnarray}
Taking the norm in both sides of \eqref{espa5} and using \eqref{ae1} yields
\begin{eqnarray}
\label{espa6}
\Vert\theta(t)\Vert&\leq& C\left\Vert \rho\left((t+\tau)/2\right)\right\Vert+\int_{\tau}^{\frac{(t+\tau)}{2}}\left\Vert U_h(t,s)A_h(s)\right\Vert_{\mathcal{L}(H)}\Vert \rho(s)\Vert ds+\int_{\frac{(t+\tau)}{2}}^t\Vert D_s\rho(s)\Vert ds\nonumber\\
&\leq& C\left\Vert \rho\left((t+\tau)/2\right)\right\Vert+\int_{\tau}^{\frac{(t+\tau)}{2}}(t-s)^{-1}\Vert \rho(s)\Vert ds+\int_{\frac{(t+\tau)}{2}}^t\Vert D_s\rho(s)\Vert ds.
\end{eqnarray}
Using \eqref{ritz2} and \eqref{ritz3}, it holds that
\begin{eqnarray}
\label{espa7}
\Vert \rho(s)\Vert\leq  Ch^r\Vert w(s)\Vert_r,\quad \Vert D_s\rho(s)\Vert \leq Ch^r\left(\Vert w(s)\Vert_r+\Vert D_sw(s)\Vert_r\right).
\end{eqnarray}
Note that the solution of \eqref{determ1} can be represented as follows.
\begin{eqnarray}
\label{encore1}
w(s)=U(s,\tau)v,\quad s\geq \tau.
\end{eqnarray}
Pre-multiplying both sides of \eqref{encore1} by $(-A(s))^{\frac{r}{2}}$ and using \eqref{ae1} yields
\begin{eqnarray}
\label{encore2}
\left\Vert (-A(s))^{\frac{r}{2}}w(s)\right\Vert&\leq& \left\Vert (-A(s))^{\frac{r}{2}}U(s,\tau)(-A(\tau))^{-\frac{\gamma}{2}}\right\Vert_{\mathcal{L}(H)}\left\Vert (-A(\tau))^{\frac{\gamma}{2}}v\right\Vert\nonumber\\
&\leq& C(s-\tau)^{-\frac{(r-\gamma)}{2}}\left\Vert (-A(\tau))^{\frac{\gamma}{2}}v\right\Vert\leq C(s-\tau)^{-\frac{(r-\gamma)}{2}}\Vert v\Vert_{\gamma}.
\end{eqnarray}
Therefore it holds that
\begin{eqnarray}
\label{espa8}
\Vert w(s)\Vert_r\leq C(s-\tau)^{-\frac{(r-\gamma)}{2}}\Vert v\Vert_{\gamma}, \quad 0\leq \gamma\leq r\leq 2,\quad \tau<s.
\end{eqnarray}
Substituting \eqref{espa8} in \eqref{espa7} yields
\begin{eqnarray}
\label{espa8a}
\Vert \rho(s)\Vert_r\leq Ch^r(s-\tau)^{-\frac{(r-\gamma)}{2}}\Vert v\Vert_{\gamma}.
\end{eqnarray}
Taking the derivative with respect to $s$ in both sides of \eqref{encore1} yields
\begin{eqnarray}
\label{encore3}
D_sw(s)=-A(s)U(s,\tau)v.
\end{eqnarray}
As for  \eqref{encore2}, pre-multiplying both sides of \eqref{encore3} by $(-A(s))^{\frac{r}{2}}$ and using \eqref{ae1}  yields
\begin{eqnarray}
\label{encore4a}
\Vert D_sw(s)\Vert_r\leq C(s-\tau)^{-1-\frac{(r-\gamma)}{2}}\Vert v\Vert_{\gamma}.
\end{eqnarray}
Substituting \eqref{espa8} and  \eqref{encore4a} in the second estimate of \eqref{espa7} yields
\begin{eqnarray}
\label{espa15}
\Vert D_s\rho(s)\Vert\leq Ch^r\left((s-\tau)^{-\frac{(r-\gamma)}{2}}\Vert v\Vert_{\gamma}+(s-\tau)^{-1-\frac{(r-\gamma)}{2}}\Vert v\Vert_{\gamma}\right)\leq Ch^r(s-\tau)^{-1-\frac{(r-\gamma)}{2}}\Vert v\Vert_{\gamma}.
\end{eqnarray}
Substituting the first estimate of \eqref{espa7} and \eqref{espa15} in \eqref{espa6} and using \eqref{espa8a}  yields 
\begin{eqnarray}
\label{espa17}
\Vert\theta(t)\Vert&\leq& Ch^r(t-\tau)^{-\frac{(r-\gamma)}{2}}\Vert v\Vert_{\gamma}+Ch^r\int_{\tau}^{\frac{t+\tau}{2}}(t-s)^{-1}(s-\tau)^{-\frac{(r-\gamma)}{2}}\Vert v\Vert_{\gamma}ds\nonumber\\
&+&Ch^r\int_{\frac{t+\tau}{2}}^t(s-\tau)^{-1-\frac{(r-\gamma)}{2}}\Vert v\Vert_{\gamma}ds.
\end{eqnarray}
Using the estimate 
\begin{eqnarray}
\label{espa18}
\int_{\tau}^{\frac{t+\tau}{2}}(t-s)^{-1}(s-\tau)^{-\frac{(r-\gamma)}{2}}ds+\int_{\frac{t+\tau}{2}}^t(s-\tau)^{-1-\frac{(r-\gamma)}{2}}ds\leq C(t-\tau)^{-\frac{(r-\gamma)}{2}},\nonumber
\end{eqnarray}
it follows  from \eqref{espa17} that
\begin{eqnarray}
\label{espa20}
\Vert \theta(t)\Vert\leq  Ch^r(t-\tau)^{-\frac{(r-\gamma)}{2}}\Vert v\Vert_{\gamma}.
\end{eqnarray}
Substituting \eqref{espa20} and \eqref{espa8a} in \eqref{espa0}  completes the proof of \lemref{spaceerrorlemma}.
\end{proof}

\subsection{Error estimate of the semilinear problem under global Lipschitz condition}
\begin{theorem}
\label{theorem2}
Let  Assumptions  \ref{assumption1} and \ref{assumption3}  be fulfilled. Let $u(t)$ and $u^h(t)$ be defined by \eqref{mild0} and \eqref{mild4} respectively. 
Then the following error estimate holds 
\begin{eqnarray}
\label{time1}
\Vert u(t)-u^h(t)\Vert\leq Ch^{2}t^{-1+\beta/2}+Ch^2(1+\ln(t/h^2)),\quad 0< t\leq T.
\end{eqnarray}
If in addition the nonlinearity $F$ satisfies the linear growth condition $\Vert F(t,v)\Vert\leq C\Vert v\Vert$ or if there exists $\delta>0$ small enough such that $\Vert(-A(s))^{\delta}F(t, v)\Vert\leq Ct+C\Vert (-A(s))^{\delta}v\Vert$, $s,t\in[0, T]$, $v\in H$, then the following optimal error estimate holds
\begin{eqnarray}
\label{time2}
\Vert u(t)-u^h(t)\Vert\leq Ch^{2}t^{-1+\beta/2},\quad 0<t\leq T,
\end{eqnarray}
where $\beta$ is defined in Assumption \ref{assumption1}.
\end{theorem}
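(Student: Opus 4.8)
The plan is to start from the mild representations \eqref{mild0} and \eqref{mild4} and split the error into three pieces,
\begin{eqnarray*}
u(t)-u^h(t) &=& \bigl[U(t,0)-U_h(t,0)P_h\bigr]u_0 + \int_0^t\bigl[U(t,s)-U_h(t,s)P_h\bigr]F(s,u(s))\,ds \\
&& {}+ \int_0^t U_h(t,s)P_h\bigl[F(s,u(s))-F(s,u^h(s))\bigr]\,ds \;=:\; \mathrm{I}(t)+\mathrm{II}(t)+\mathrm{III}(t).
\end{eqnarray*}
The term $\mathrm{I}(t)$ is precisely the homogeneous error $\bigl[U(t,0)-U_h(t,0)P_h\bigr]v$ of \eqref{determ1}--\eqref{determ2} with $\tau=0$ and $v=u_0$, so \lemref{spaceerrorlemma} applied with $r=2$ and $\gamma=\beta$ (admissible since $\gamma=\beta\le 2$ by \assref{assumption1}) gives $\Vert\mathrm{I}(t)\Vert\le Ch^2 t^{-1+\beta/2}\Vert u_0\Vert_{\beta}$.

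For $\mathrm{II}(t)$ the difficulty is that the crudest use of \lemref{spaceerrorlemma}, with $\gamma=0$, produces the non-integrable convolution kernel $(t-s)^{-1}$. I would therefore split $\int_0^t=\int_0^{t-h^2}+\int_{t-h^2}^t$ (taking $t\ge h^2$; for $t<h^2$ the whole integral is $O(h^2)$ directly). On $[0,t-h^2]$ I apply \lemref{spaceerrorlemma} with $r=2$, $\gamma=0$ together with the uniform bound $\Vert F(s,u(s))\Vert\le C$ furnished by \eqref{spacereg1}, and integrate the kernel: $\int_0^{t-h^2}(t-s)^{-1}\,ds=\ln(t/h^2)$, which contributes $Ch^2\ln(t/h^2)$. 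On the short interval $[t-h^2,t]$ I discard the cancellation and keep only the stability of both evolution families, $\Vert U(t,s)-U_h(t,s)P_h\Vert_{\mathcal L(H)}\le C$ (the case $\alpha=0$ of \eqref{ae1}, valid for $U$ and $U_h$ alike), so that this part is bounded by $C$ times the length $h^2$ of the interval. Hence $\Vert\mathrm{II}(t)\Vert\le Ch^2\bigl(1+\ln(t/h^2)\bigr)$.

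For $\mathrm{III}(t)$ I bound $\Vert U_h(t,s)P_h\Vert_{\mathcal L(H)}\le C$ (again \eqref{ae1} with $\alpha=0$) and use the Lipschitz estimate \eqref{Lipschitz}, which gives $\Vert\mathrm{III}(t)\Vert\le CK\int_0^t\Vert u(s)-u^h(s)\Vert\,ds$. Collecting the three bounds,
\begin{eqnarray*}
\Vert u(t)-u^h(t)\Vert \le Ch^2 t^{-1+\beta/2} + Ch^2\bigl(1+\ln(t/h^2)\bigr) + C\int_0^t\Vert u(s)-u^h(s)\Vert\,ds,
\end{eqnarray*}
and a generalized Gr\"onwall inequality---whose time-singular forcing $t^{-1+\beta/2}$ is locally integrable for $\beta>0$---yields \eqref{time1}.

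Finally, for \eqref{time2} the key observation is that the logarithm was produced only by the choice $\gamma=0$ in the estimate of $\mathrm{II}(t)$. Under the smoothing hypothesis $\Vert(-A(s))^{\delta}F(t,v)\Vert\le Ct+C\Vert(-A(s))^{\delta}v\Vert$ I would instead apply \lemref{spaceerrorlemma} on all of $[0,t]$ with $r=2$ and $\gamma=2\delta\in(0,2]$, which replaces the kernel by the integrable $(t-s)^{-1+\delta}$ tested against $\Vert(-A(s))^{\delta}F(s,u(s))\Vert\le Cs+C\Vert(-A(s))^{\delta}u(s)\Vert$; the last norm is controlled by applying $(-A(s))^{\delta}$ to the mild formula \eqref{mild0} and invoking \eqref{ae1}, and $\int_0^t(t-s)^{-1+\delta}\bigl(s+s^{-\delta}\bigr)\,ds$ is a finite Beta-type integral, so no logarithm remains. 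The linear growth condition $\Vert F(t,v)\Vert\le C\Vert v\Vert$ is treated analogously after first bootstrapping the regularity of $u$ from \eqref{mild0}. Re-running the Gr\"onwall step then gives \eqref{time2}. I expect the main obstacle to be the control of the singular convolution in $\mathrm{II}(t)$: the cut at distance $h^2$ from the diagonal is exactly what converts the divergent $(t-s)^{-1}$ into the logarithmic loss, and eliminating it requires the additional regularity of $F$; a secondary subtlety is ensuring the Gr\"onwall argument tolerates the time-singular forcing $t^{-1+\beta/2}$ coming from the non-smooth initial data.
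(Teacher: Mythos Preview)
Your argument for \eqref{time1} and for \eqref{time2} under the $\delta$-smoothing hypothesis is correct and follows the same route as the paper; the only cosmetic difference is that you route the Lipschitz term through $U_h(t,s)P_h$ and feed $F(s,u(s))$ into the evolution-error term, whereas the paper swaps these roles (using $U(t,s)$ for the Lipschitz part and $F(s,u^h(s))$ in the evolution-error integral). Both decompositions work equally well.

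The one place your sketch is genuinely incomplete is the linear-growth case of \eqref{time2}. The hypothesis $\Vert F(t,v)\Vert\le C\Vert v\Vert$ is purely an $H$-norm bound and does \emph{not} transfer fractional regularity from $v$ to $F(t,v)$; so no amount of bootstrapping $u$ into $\mathcal{D}\bigl((-A)^{\delta}\bigr)$ will let you invoke \lemref{spaceerrorlemma} with $\gamma>0$ on $F(s,u(s))$. The paper takes a different route here: it writes the evolution-error integrand as $\bigl[U(t,s)-U_h(t,s)P_h\bigr]\bigl(F(s,u^h(s))-F(t,u^h(t))\bigr)$ plus a piece with $F(t,u^h(t))$ constant in $s$, and then exploits \emph{time}-H\"older continuity of $s\mapsto u^h(s)$ (obtained from the mild formula \eqref{mild4} and \eqref{ae1}) together with the Lipschitz bound \eqref{Lipschitz} to gain a factor $(t-s)^{\epsilon}$, converting the non-integrable kernel $(t-s)^{-1}$ into the integrable $(t-s)^{-1+\epsilon}$. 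That is the missing idea you would need to supply in this case.
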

\begin{remark}
Note that the hypothesis $\Vert F(t,v)\Vert \leq C\Vert v\Vert$ is  not too restrictive. An example of class of nonlinearities for which such hypothesis is fulfilled is a class of functions satisfying $F(t, 0)=0$, $t\in[0, T]$. Concrete examples are  operators of the form $F(t,v)=f(t)\frac{v}{1+\vert v\vert}$, with $f:[0, T]\longrightarrow\mathbb{R}$ continuous or bounded on $[0, T]$. 
\end{remark}
\begin{remark}
It is possible to obtain an error estimate without irregularities terms of the form $t^{-1+\beta/2}$ with a drawback that the convergence rate will not be $2$, but will depend on the regularity of the initial data. The proof follows the same lines as that of \thmref{theorem2} using \lemref{spaceerrorlemma} and this yields
\begin{eqnarray*}
\Vert u(t)-u^h(t)\Vert\leq Ch^{\beta},\quad t\in [0, T]. 
\end{eqnarray*}
\end{remark}

\begin{proof}  of \thmref{theorem2}. 
 We start with  the proof of \eqref{time1}.
Subtracting \eqref{mild4} form \eqref{mild0},  taking the  norm in both sides and using triangle inequality yields
\begin{eqnarray}
\label{estiI}
\Vert u(t)-u^h(t)\Vert&\leq& \left\Vert U(t,0)u_0-U_h(t,0)P_hu_0\right\Vert\nonumber\\
&+&\left\Vert\int_0^{t}\left[U(t,s)F\left(s,u(s)\right)-U_h(t,s)P_hF\left(s,u^h(s)\right)\right]
 ds\right\Vert=:I_0+I_1.
\end{eqnarray}
Using \lemref{spaceerrorlemma} with $r=2$ and $\gamma=\beta$  yields
\begin{eqnarray}
\label{jour1}
I_0\leq Ch^{2}t^{-1+\beta/2}\Vert u_0\Vert_{\beta}\leq Ch^{2}t^{-1+\beta/2}.
\end{eqnarray}
Using  \assref{assumption3}, \eqref{ae1} and \eqref{spacereg1} yields
\begin{eqnarray}
\label{estiI2}
I_1&\leq& \int_0^t\left\Vert U(t,s)\left[F\left(s,u(s)\right)-F\left(s,u^h(s)\right)\right]\right\Vert ds+\int_0^t\left\Vert \left[U(t,s)-U_h(t,s)P_h\right]F\left(s,u^h(s)\right)\right\Vert ds\nonumber\\
&\leq& C\int_0^t\left\Vert u(s)-u^h(s)\right\Vert ds+C\int_0^t\left\Vert \left[U(t,s)-U_h(t,s)P_h\right]]F\left(s,u^h(s)\right)\right\Vert ds.
\end{eqnarray}
If $0\leq t\leq h^2$, then using \eqref{ae1} easily yields   $I_1\leq Ch^2+\int_0^t\Vert u(s)-u^h(s)\Vert ds$. If $0<h^2\leq t$, using  \lemref{spaceerrorlemma} (with $r=2$ and $\gamma=0$), and splitting the second integral in two parts yields
\begin{eqnarray}
\label{estiI3}
I_1&\leq& C\int_0^t\Vert u(s)-u^h(s)\Vert ds+Ch^2\int_0^{t-h^2}(t-s)^{-1}ds+Ch^2\int_{t-h^2}^t(t-s)^{-1}ds\nonumber\\
&\leq& C\int_0^t\Vert u(s)-u^h(s)\Vert ds+Ch^2(1+\ln(t/h^2)).
\end{eqnarray}
Substituting \eqref{estiI3} and \eqref{jour1} in \eqref{estiI} and applying  Gronwall's lemma  proves \eqref{time1}. To prove \eqref{time2}, we only need to re-estimate the term $I_3:=\int_0^t\left\Vert \left[U(t,s)-U_h(t,s)P_h\right]F\left(s,u^h(s)\right)\right\Vert ds$. Note that under assumption $\Vert(-A(s))^{\delta}F(t, v)\Vert\leq Ct+C\Vert (-A(s))^{\delta}v\Vert$, using \lemref{spaceerrorlemma} (with $r=2$ and $\gamma=\delta$) and \eqref{spacereg1}, following the same lines as above one easily obtain $I_3\leq Ch^2$. Let us now estimate $I_3$ under the hypothesis $\Vert F(t,v)\Vert \leq C\Vert v\Vert$. Using \assref{assumption3},  \eqref{spacereg1} and exploiting the mild solution \eqref{mild4} one easily obtain
\begin{eqnarray}
\label{estiI4}
\Vert F(t,u^h(t))\Vert\leq \Vert u^h(t)\Vert\leq C\vert t-s\vert^{\epsilon}s^{-\epsilon},\;\;\; \Vert F(s, u^h(s))-F(t, u^h(t))\Vert\leq C\vert t-s\vert^{\epsilon}s^{-\epsilon},
\end{eqnarray}
for some $\epsilon\in(0, 1)$ and any $s, t\in[0, T]$. Using \lemref{spaceerrorlemma} (with $r=2$ and $\gamma=0$), triangle inequality and \eqref{estiI4} yields
\begin{eqnarray*}
I_3&\leq& Ch^2\int_0^t(t-s)^{-1}\left\Vert F\left(s,u^h(s)\right)-F(t,u^h(t))\right\Vert ds+Ch^2\int_0^t(t-s)^{-1}\Vert F(t,u^h(t))\Vert ds\nonumber\\
&\leq& Ch^2\int_0^t(t-s)^{-1+\epsilon}s^{-\epsilon}ds\leq Ch^2.
\end{eqnarray*}
Hence the new estimate of $I_1$ is given below
\begin{eqnarray}
\label{estiI5}
I_1\leq Ch^2+C\int_0^t\Vert u(s)-u^h(s)\Vert ds.
\end{eqnarray}
Substituting \eqref{estiI5} and \eqref{jour1} in \eqref{estiI} and applying  Gronwall's lemma  proves \eqref{time2} and the proof of \thmref{theorem2} is completed.
\end{proof}

\subsection{Error estimate of the semilinear problem under polynomial growth condition}
\label{sectpoly}
In this section, we take $\beta\in\left(\frac{d}{2}, 2\right]$. We make the following assumptions on the nonlinearity.
\begin{Assumption}
\label{assumption2}
there exist two constants and  $L_1, c_1\in[0, \infty)$ such that the nonlinear function $F$  satisfies the following 
\begin{eqnarray}
\label{Polynome1}
\Vert F(w)\Vert&\leq& L_1+ L_1\Vert w\Vert\left(1+\Vert w\Vert^{c_1}_{\mathcal{C}}\right), \quad w\in H,\\
\label{Polynome2}
 \Vert F(w)-F(v)\Vert&\leq& L_1\Vert w-v\Vert\left(1+\Vert u\Vert^{c_1}_{\mathcal{C}}+\Vert v\Vert^{c_1}_{\mathcal{C}}\right),\quad w, v\in H.
\end{eqnarray}
\end{Assumption}
Let us recall the following Sobolev embedding (continuous embedding). \begin{eqnarray}
\label{sobolev1}
\mathcal{D}\left((-A(0))^{\delta}\right)\subset C\left(\Lambda, \mathbb{R}\right),\quad \text{for}\quad \delta >\frac{d}{2},\quad d\in\{1,2,3\}.
\end{eqnarray}
It is a classical solution that under \assref{assumption2} \eqref{semi0} has a unique mild solution $u$ satisfying\footnote{This remains  true if $u$ is replaced by its discrete version $u^h$.} $u\in C\left([0, T], \mathcal{D}\left((-A(0))^{\beta}\right)\right)$,  see e.g. \cite{Sobolev}. Hence using the Sobolev embbeding \eqref{sobolev1}, it holds that
\begin{eqnarray}
\label{sobolev2}
\Vert u(t)\Vert_{\mathcal{C}}\leq C\left\Vert (-A(0))^{\frac{\beta}{2}}u(t)\right\Vert\leq C,\quad \Vert u^h(t)\Vert_{\mathcal{C}}\leq C\left\Vert (-A(0))^{\frac{\beta}{2}}u^h(t)\right\Vert\leq C,\quad t\in[0, T].
\end{eqnarray}

\begin{theorem}
\label{mainresul2}
Let $u(t)$ and $u^h(t)$ be solution of \eqref{semi0} and \eqref{semi1} respectively. Let Assumptions \ref{assumption1} and \ref{assumption2}  be fulfilled. Then the following error estimate   holds
\begin{eqnarray}
\Vert u(t)-u^h(t)\Vert\leq Ch^{2}t^{-1+\beta/2}+Ch^2\left(1+\ln(t/h^2)\right),\quad t\in[0, T].
\end{eqnarray}
If in addition there exists $c_1, c_2\geq 0$ such that the nonlinearity $F$ satisfies the polynomial growth condition 
\begin{eqnarray}
\label{Polysharp1}
\Vert F(t,v)\Vert\leq C\Vert v\Vert^{c_1}\Vert v\Vert_{\mathcal{C}}^{c_2},
\end{eqnarray}
then the following optimal  error estimate holds
\begin{eqnarray}
\label{time2}
\Vert u(t)-u^h(t)\Vert\leq Ch^{2}t^{-1+\beta/2},\quad 0<t\leq T,
\end{eqnarray}
\end{theorem}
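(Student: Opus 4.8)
The plan is to follow the architecture of the proof of \thmref{theorem2}, replacing the global Lipschitz condition of \assref{assumption3} by the local polynomial bounds \eqref{Polynome1}--\eqref{Polynome2} and neutralising the resulting polynomial factors by means of the uniform bounds \eqref{sobolev2}. Since $u$ also admits the mild representation \eqref{mild0} under \assref{assumption2}, subtracting \eqref{mild4} and taking norms gives, exactly as in \eqref{estiI}, $\Vert u(t)-u^h(t)\Vert\le I_0+I_1$, where $I_0=\Vert U(t,0)u_0-U_h(t,0)P_hu_0\Vert$ and $I_1$ is the integral remainder. For $I_0$ I would invoke \lemref{spaceerrorlemma} with $r=2$, $\gamma=\beta$ together with \assref{assumption1} to obtain $I_0\le Ch^2t^{-1+\beta/2}\Vert u_0\Vert_{\beta}\le Ch^2t^{-1+\beta/2}$.

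For $I_1$ I would split as in \eqref{estiI2} into the Lipschitz-difference term $\int_0^t\Vert U(t,s)[F(s,u(s))-F(s,u^h(s))]\Vert\,ds$ and the consistency term $I_3:=\int_0^t\Vert[U(t,s)-U_h(t,s)P_h]F(s,u^h(s))\Vert\,ds$. In the first term I would apply \eqref{Polynome2} and then bound $1+\Vert u(s)\Vert_{\mathcal{C}}^{c_1}+\Vert u^h(s)\Vert_{\mathcal{C}}^{c_1}$ by a constant via \eqref{sobolev2} --- this is where $\beta>d/2$, the embedding \eqref{sobolev1}, and the footnoted analogue of \eqref{spacereg1} for $u^h$ are used --- which together with \eqref{ae1} yields $\le C\int_0^t\Vert u(s)-u^h(s)\Vert\,ds$. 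For $I_3$ I would first use \eqref{Polynome1} and \eqref{sobolev2} to get the uniform bound $\Vert F(s,u^h(s))\Vert\le C$, then handle $0<t\le h^2$ by the crude bound from \eqref{ae1} (giving $Ct\le Ch^2$) and $h^2\le t$ by \lemref{spaceerrorlemma} with $r=2$, $\gamma=0$ on $(0,t-h^2]$ plus the crude bound on $[t-h^2,t]$, exactly as in \eqref{estiI3}, obtaining $Ch^2(1+\ln(t/h^2))$. Collecting these bounds and applying Gronwall's lemma proves the first inequality.

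For the optimal estimate only $I_3$ needs to be re-estimated (the Gronwall step is unchanged). I would decompose $F(s,u^h(s))=[F(s,u^h(s))-F(t,u^h(t))]+F(t,u^h(t))$. For the difference part I would establish a temporal-H\"older bound $\Vert F(s,u^h(s))-F(t,u^h(t))\Vert\le C|t-s|^{\epsilon}s^{-\epsilon}$ for some $\epsilon\in(0,1)$: \eqref{Polynome2} and \eqref{sobolev2} reduce it to $C\Vert u^h(t)-u^h(s)\Vert$, which is controlled through the mild formula \eqref{mild4}, the smoothing estimates \eqref{ae1}, \assref{assumption1}, and the uniform bound $\Vert F(\cdot,u^h(\cdot))\Vert\le C$; combined with \lemref{spaceerrorlemma} ($r=2$, $\gamma=0$) this contributes $Ch^2\int_0^t(t-s)^{-1+\epsilon}s^{-\epsilon}\,ds\le Ch^2$, the integral being finite and independent of $t$. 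For the remaining piece, $F(t,u^h(t))$ is independent of $s$, so one bounds $\Vert\int_0^t[U(t,s)-U_h(t,s)P_h]\,ds\;F(t,u^h(t))\Vert$; using the cancellation hidden in this time integral (it is the semidiscrete error of the auxiliary linear problem $v'=A(t)v+g$, $v(0)=0$, with the time-independent, merely $H$-valued source $g=F(t,u^h(t))$) together with \eqref{Polysharp1}, which forces $\Vert F(t,u^h(t))\Vert\le C$, one obtains the bound $Ch^2$. Combining with the estimate for $I_0$ and Gronwall's lemma gives the optimal estimate.

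I expect the main obstacle to be precisely this last, logarithm-free control of $I_3$: the pointwise bound of \lemref{spaceerrorlemma} with $\gamma=0$ produces only the non-integrable kernel $(t-s)^{-1}$, so one has to genuinely exploit the temporal regularity of $u^h$ (hence of $F(\cdot,u^h(\cdot))$) and the integral cancellation instead of estimating under the integral sign. Everything else is a routine transcription of the proof of \thmref{theorem2}, once the growth and local-Lipschitz bounds of \assref{assumption2} have been converted into genuine constants by means of the a priori $\mathcal{C}$-estimates \eqref{sobolev2}.
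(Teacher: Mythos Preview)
Your proposal is correct and matches the paper's approach exactly: the paper's own proof of \thmref{mainresul2} consists of the single sentence ``The proof goes along the same lines as that of \thmref{theorem2} by using appropriately \assref{assumption2} and \eqref{sobolev2},'' and this is precisely the program you lay out --- rerun the $I_0/I_1$ decomposition of \thmref{theorem2}, neutralise the polynomial factors from \eqref{Polynome1}--\eqref{Polynome2} via the uniform $\mathcal{C}$-bounds \eqref{sobolev2}, and close with Gronwall.

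One small divergence worth flagging: in the optimal estimate, your treatment of the piece $\int_0^t[U(t,s)-U_h(t,s)P_h]\,ds\,F(t,u^h(t))$ via an ``integral cancellation'' / auxiliary inhomogeneous linear problem is not what the paper actually does in the template proof of \thmref{theorem2}. There (see \eqref{estiI4} and the displayed estimate for $I_3$ immediately after) the paper simply asserts H\"older-type bounds $\Vert F(t,u^h(t))\Vert\le C|t-s|^{\epsilon}s^{-\epsilon}$ and $\Vert F(s,u^h(s))-F(t,u^h(t))\Vert\le C|t-s|^{\epsilon}s^{-\epsilon}$ and bounds both resulting integrals by $Ch^2\int_0^t(t-s)^{-1+\epsilon}s^{-\epsilon}\,ds$. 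Your route is arguably cleaner, but it invokes an integrated error bound for the two-parameter evolution that is not stated or proved anywhere in the paper (\lemref{spaceerrorlemma} is only pointwise), so if you want to align with the paper you should instead mimic the H\"older-bound argument of \eqref{estiI4}; if you prefer your cancellation argument, you would need to supply that integrated estimate separately.
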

\begin{proof}
The proof goes along the same lines as that of \thmref{theorem2} by using appropriately \assref{assumption2} and \eqref{sobolev2}. 
\end{proof}

\begin{remark}
It is possible in \thmref{mainresul2} to obtain convergence estimate without irregularities terms $t^{-1+\beta/2}$. But the convergence rate will depend on the regularity of the initial data and will be of the form 
\begin{eqnarray*}
\Vert u(t)-u^h(t)\Vert\leq Ch^{\beta},\quad t\in[0, T].
\end{eqnarray*}
\end{remark}

\begin{remark}
 \assref{assumption2} is weaker than \assref{assumption3} and therefore include more nonlinearities. However, the price to pay when using \assref{assumption2} is that one requires more regularity on the initial data. 
\end{remark}

\begin{remark}
Let $\varphi: \mathbb{R}\longrightarrow \mathbb{R}$ be polynomial of any order. The nonlinear operator $F$ is defined as the Nemytskii operator
 \begin{eqnarray*}
 F\left(u\right)(x)=\varphi\left(u(x)\right),\quad u\in H,\quad x\in \Lambda,
 \end{eqnarray*}
 is a concrete example satisfying \assref{assumption2}.
 
 In fact, let us assume without loss of generality that $\varphi$ is polynomial of degree $l>1$, that is
\begin{eqnarray}
\varphi(x)=\sum_{i=0}^la_ix^i,\quad x\in\mathbb{R}.
\end{eqnarray}
\label{Remarkpol}
Note that the proofs  in the cases $l=0, 1$ are obvious. 
For any $u\in H\cap \mathcal{C}(\overline{\Lambda}, \mathbb{R})$, using traingle inequality and the fact $\left(\sum\limits_{i=0}^lc_i\right)^2\leq (l+1)\sum\limits_{i=0}^lc_i^2$, $c_i\geq 0$, we obtain
\begin{eqnarray}
\Vert F(v)\Vert^2&=&\int_{\Lambda}\vert F(v)(x)\vert^2dx=\int_{\Lambda}\vert \varphi(v(x))\vert^2\leq (l+1)\sum_{i=0}^l\vert a_i\vert^2\int_{\Lambda}\vert v(x)\vert^{2i}dx\nonumber\\
&\leq&(l+1)\vert a_0\vert^2+(l+1)\vert a_1\vert\int_{\Lambda}\vert u(x)\vert^2dx\nonumber\\
&+&(l+1)\max_{2\leq i\leq l}\sup_{x\in\overline{\Lambda}}\vert v(x)\vert^{2i-2}\sum_{i=2}^l\vert a_i\vert^2\int_{\Lambda}\vert v(x)\vert^2dx\nonumber\\
&\leq&(l+1)\vert a_0\vert^2+(l+1)\vert a_1\vert^2\Vert v\Vert^2+(l+1)\max_{2\leq i\leq 2}\Vert v\Vert^{2i-2}_{\mathcal{C}}\left(\max_{2\leq i\leq l}\vert a_i\vert^2\right)\Vert v\Vert^2\nonumber\\
&\leq&(l+1)\vert a_0\vert^2+(l+1)\vert a_1\vert^2\Vert v\Vert^2+(l+1)\max_{2\leq i\leq 2}\left(\Vert v\Vert_{\mathcal{C}}+1\right)^{2i-2}\left(\max_{2\leq i\leq l}\vert a_i\vert^2\right)\Vert v\Vert^2\nonumber\\
&\leq&(l+1)\vert a_0\vert^2+(l+1)\vert a_1\vert^2\Vert v\Vert^2+(l+1)2^{2l-3}\left(\Vert v\Vert_{\mathcal{C}}^{2l-2}+1\right)\left(\max_{2\leq i\leq l}\vert a_i\vert^2\right)\Vert v\Vert^2\nonumber\\
&\leq& L_1+L_1\Vert v\Vert^2\left(1+\Vert u\Vert^{2l-2}_{\mathcal{C}}\right).
\end{eqnarray}
This completes the proof of \eqref{Polynome1}.   The proof of \eqref{Polynome2} is similar to that of \eqref{Polynome1} by using the following well known fact
\begin{eqnarray}
a^n-b^n=(a-b)\sum_{i=0}^{n-1}a^ib^{n-1-i},\quad a, b\in \mathbb{R},\quad n\geq 1.
\end{eqnarray}
\end{remark}
\begin{remark}
If in \rmref{Remarkpol} we take the constant term of $\varphi$ to be  $0$, then the hypothesis \eqref{Polysharp1} is fulfilled. 
\end{remark}




\end{document}